\newtheorem{thm}{Theorem}
\newtheorem{defn}{Definition}
\newtheorem{rem}{Remark}
\title{\LARGE \bf
Application of Symmetry Groups to the Observability Analysis of Partial Differential Equations
}
\author{Bernd Kolar, Hubert Rams, and Markus Sch{\"o}berl
\thanks{This work has been supported by the Austrian Science Fund (FWF) under grant number P 29964-N32. All authors are with the Institute of Automatic Control and Control Systems Technology, Johannes Kepler University Linz, Altenbergerstrasse 66, 4040 Linz, Austria.\newline {Email: \tt\small bernd.kolar@jku.at}}
}
\begin{document}

\maketitle
\thispagestyle{empty}
\pagestyle{empty}

\begin{abstract}

Symmetry groups of PDEs allow to transform solutions continuously into other solutions.
In this paper, we use this property for the observability analysis of nonlinear PDEs with input and output. 
Based on a differential-geometric representation of the nonlinear system, we derive conditions for the existence of special symmetry groups that do not change the trajectories of the input and the output. If such a symmetry group exists, every solution can be transformed into other solutions with the same input and output trajectories but different initial conditions, and this property can be used to prove that the system is not observable. We also put emphasis on showing how the approach simplifies for linear systems, and how it is related to the well-known observability concepts from infinite-dimensional linear systems theory.

\end{abstract}

\begin{keywords}

differential geometry, nonlinear partial differential equations, observability, symmetry groups

\end{keywords}


\setlength{\arraycolsep}{2pt} 

\section{Introduction}

Symmetry groups of (nonlinear) partial differential equations (PDEs)
were introduced by S. Lie in the late nineteenth century, and have
a wide range of applications. Roughly speaking, a symmetry group of
a system of PDEs is a group which transforms solutions of the system
into other solutions, see \cite{Olver1993}. Thus, symmetry groups
can be used to construct new solutions from given ones. In this contribution,
we shall employ symmetry groups to analyze the observability \textendash{}
or rather, to prove the non-observability \textendash{} of nonlinear
infinite-dimensional systems with input and output.

The mathematical framework for the calculation of symmetry groups
is differential geometry. Even though they are probably not as widely
used as functional-analytic approaches, differential-geometric methods
have turned out to be well-suited for the system- and control-theoretic
analysis of PDEs, see e.g. \cite{Pommaret1994}, \cite{Stormark2000},
\cite{Krasilshchik1986}, \cite{GulliverLasiecka2004}, \cite{SchaftMaschke2002},
\cite{SchoeberlSiukaAutomatica}, or \cite{SchoeberlCovariant2007},
to mention but a few.

The observability problem is about determining the initial conditions
of a system uniquely from the trajectories of the input and the output.
Following the terminology used in \cite{HermannKrener} and \cite{NijmeijerVdS}
for finite-dimensional systems, a pair of initial conditions is said
to be indistinguishable if for every admissible trajectory of the
input, the system generates for both initial conditions the same trajectory
of the output. In other words, the initial conditions are said to
be indistinguishable if they determine the same input-output map.
The system is said to be observable, if (locally) there exists no
pair of indistinguishable initial conditions. As pointed out in \cite{HermannKrener},
already for finite-dimensional nonlinear systems the choice of the
input trajectory is important, since the observability of a system
does not imply that every input trajectory distinguishes two initial
conditions. For linear systems the situation is simpler. Because of
the superposition principle the choice of the input does not matter:
If one input distinguishes two initial conditions, then every input
does.

In \cite{RiegerMTNS2010} and \cite{RiegerCDC2007}, symmetry groups
have already been used to show that a system of nonlinear PDEs with
input and output is not ``observable along a trajectory''. Furthermore,
in \cite{RiegerSchlacherSchoeberlIFAC2008} they have also been used
to study the accessibility of nonlinear PDEs with input. The term
``along a trajectory'' in \cite{RiegerMTNS2010} and \cite{RiegerCDC2007}
means that the observability problem is considered only for a fixed
choice of the input trajectory. Substituting the trajectory of the
input into the system equations yields an autonomous, time-variant
system, and then symmetry groups are used to show that there exist
indistinguishable initial conditions that generate the same output
trajectory.

In this contribution, in contrast, we consider the full observability
problem, where the input is free, for a class of second-order nonlinear
PDEs with a single input and a single output. The idea is very simple
and roughly speaking as follows: If there exists a symmetry group
that does not change the trajectories of the input and the output,
then the system cannot be observable. This is due to the fact that
such a symmetry group allows to transform every solution into (infinitely
many) other solutions with the same input trajectory and the same
output trajectory, but different initial conditions. These initial
conditions are indistinguishable, and consequently the system is not
observable. We also put emphasis on showing how the symmetry group
approach simplifies for linear systems. In particular, we want to
point out how it is related to the well-known observability concepts
from infinite-dimensional linear systems theory, that can be found
e.g. in \cite{CurtainZwart1995}. Of course, it is important to remark
that such a comparison suffers from the different solution concepts
for PDEs. For the calculation of symmetry groups we need a differential-geometric
framework and consider like in \cite{Olver1993} only smooth solutions,
whereas the semigroup theory used in \cite{CurtainZwart1995} is based
on mild or generalized solutions.

The paper is structured as follows: First, in Section \ref{sec:GeometricRepresentation}
we discuss the representation of the considered class of PDEs as submanifolds
of certain jet manifolds. This differential-geometric framework is
the basis for the calculation of symmetry groups, which is discussed
in Section \ref{sec:SymmetryGroups}. In Section \ref{sec:ObservabilityAnalysis}
we show how symmetry groups can be used for our control-theoretic
application, and demonstrate it by means of two examples. Finally,
in Section \ref{sec:LinearSystems} we show how our approach simplifies
for linear systems.

\section{\label{sec:GeometricRepresentation}Geometric Representation of PDEs}

In this contribution, we consider nonlinear PDEs
\begin{equation}
\partial_{t}x^{\alpha}(z,t)=f^{\alpha}(z,t,x(z,t),\partial_{z}x(z,t),\partial_{z}^{2}x(z,t),u(t))\,,\label{eq:Nonlinear_PDE}
\end{equation}
$\alpha=1,\ldots,n_{x}$, on a 1-dimensional spatial domain $\Omega=(0,1)\subset\mathbb{R}$
with a single input $u(t)$, boundary conditions 
\begin{equation}
\begin{array}{ccl}
g^{\lambda}(t,x(0,t),\partial_{z}x(0,t)) & = & 0\,,\quad\lambda=1,\ldots,n_{A}\\
h^{\mu}(t,x(1,t),\partial_{z}x(1,t)) & = & 0\,,\quad\mu=1,\ldots,n_{B}\,,
\end{array}\label{eq:Nonlinear_PDE_BC}
\end{equation}
and an output function 
\begin{equation}
y(t)=c(t,x(z_{0},t),\partial_{z}x(z_{0},t))\label{eq:Nonlinear_PDE_y}
\end{equation}
defined at some point $z_{0}\in\bar{\Omega}$. The functions $f^{\alpha}$,
$g^{\lambda}$, $h^{\mu}$, and $c$ are assumed to be smooth, and
$n_{A}$ and $n_{B}$ denote the number of boundary conditions at
$z=0$ and $z=1$. As usual, by $\bar{\Omega}=[0,1]$ we denote the
closure of $\Omega$. Throughout this paper, we take for granted that
solutions of the PDEs (\ref{eq:Nonlinear_PDE}) with the boundary
conditions (\ref{eq:Nonlinear_PDE_BC}) exist and are uniquely determined
by the initial condition $x(z,0)$ and the input function $u(t)$
(well-posedness, see e.g. \cite{Renardy}). Since our focus is on
analyzing the PDEs from a formal geometric point of view, we shall
not verify this assumption. This is in general a difficult task, which
would require additional functional-analytic methods. 

It should be noted that we consider an input $u(t)$ that only depends
on the time $t$ and not on the spatial variable $z$, even though
it acts on the domain $\Omega$ and not on the boundary. The motivation
for this restriction is that in many engineering applications we do
not have an input $u(z,t)$ that can be chosen as a function of $z$
and $t$, but rather an input $u(t)$ that appears in the PDEs multiplied
with some fixed function of $z$, i.e. in the form $b(z)u(t)$.

In the following, we discuss the representation of the considered
nonlinear systems as submanifolds of certain jet manifolds. This differential-geometric
framework is the basis for the calculation of symmetry groups, see
\cite{Olver1993}. For an introduction to differential geometry and
to jet bundles we refer e.g. to \cite{Boothby}, \cite{Spivak}, \cite{Saunders},
and \cite{Giachetta}. We frequently use index notation and especially
the Einstein summation convention to keep formulas short and readable.
Thus, we write e.g. a vector field on an $m$-dimensional manifold
$\mathcal{M}$ with coordinates $x=(x^{1},\ldots,x^{m})$ as $v=v^{\alpha}(x)\partial_{x^{\alpha}}$
instead of $v=\sum_{\alpha=1}^{m}v^{\alpha}(x)\partial_{x^{\alpha}}$.
The Lie derivative of a function $\varphi(x)$ along a vector field
$v$ is denoted by $L_{v}(\varphi)$. To avoid mathematical subtleties,
we assume that all functions, vector fields, etc., are smooth. Furthermore,
it is important to emphasize that all our investigations are only
local. 

For a differential-geometric representation of the PDEs (\ref{eq:Nonlinear_PDE}),
we introduce the bundle $(\mathcal{E},\pi,\bar{\Omega}\times\mathbb{R}^{+})$,
where $\mathcal{E}$ is a $(3+n_{x})$-dimensional manifold with coordinates
$(z,t,x,u)$, $\bar{\Omega}\times\mathbb{R}^{+}$ is a $2$-dimensional
space-time manifold with coordinates $(z,t)$, and $\pi$ is the canonical
projection given in coordinates by $\pi:(z,t,x,u)\rightarrow(z,t)$.
The second jet manifold $J^{2}(\mathcal{E})$ has coordinates $(z,t,x,u,x_{z},x_{t},u_{z},u_{t},x_{zz},x_{zt},x_{tt},u_{zz},u_{zt},u_{tt})$,
i.e. the coordinates of $\mathcal{E}$ plus the derivatives of $x$
and $u$ with respect to $z$ and $t$ up to order two.\footnote{Note that $x$ is here an abbreviation for $(x^{1},\ldots,x^{n_{x}})$.
Likewise, $x_{z}$ is an abbreviation for $(x_{z}^{1},\ldots,x_{z}^{n_{x}})$,
and so on.} In this framework, the PDEs (\ref{eq:Nonlinear_PDE}) can be represented
as a subvariety $\mathcal{S}^{2}\subset J^{2}(\mathcal{E})$, which
is determined by the equations 
\begin{equation}
\begin{array}{rcl}
x_{t}^{\alpha}-f^{\alpha}(z,t,x,x_{z},x_{zz},u) & = & 0\,,\quad\alpha=1,\ldots,n_{x}\\
u_{z} & = & 0\\
u_{zz} & = & 0\\
u_{zt} & = & 0\,.
\end{array}\label{eq:Nonlinear_PDE_geometric}
\end{equation}
Here the additional equations for $u_{z}$, $u_{zz}$, and $u_{zt}$
incorporate the restriction that we only allow solutions where $u$
does not depend on $z$. To avoid mathematical subtleties, we assume
that the Jacobian matrix of the $n_{x}+3$ functions 
\[
\begin{array}{l}
x_{t}^{\alpha}-f^{\alpha}(z,t,x,x_{z},x_{zz},u)\,,\quad\alpha=1,\ldots,n_{x}\\
u_{z}\\
u_{zz}\\
u_{zt}
\end{array}
\]
with respect to the coordinates of $J^{2}(\mathcal{E})$ has maximal
rank $n_{x}+3$ on the subvariety $\mathcal{S}^{2}$. With this assumption,
$\mathcal{S}^{2}$ is a regular submanifold of $J^{2}(\mathcal{E})$
(of codimension $n_{x}+3$).\footnote{The superscript in $\mathcal{S}^{2}$ highlights that it is a submanifold
of the second jet manifold.}

The boundary conditions (\ref{eq:Nonlinear_PDE_BC}) are equations
on manifolds $\mathcal{B}_{A}$ and $\mathcal{B}_{B}$ with coordinates
$(t,x,u,x_{z},x_{t},u_{z},u_{t})$, i.e. with all coordinates of $J^{1}(\mathcal{E})$
except for $z$. Provided that the Jacobian matrices of the functions
$g^{\lambda}(t,x,x_{z})$, $\lambda=1,\ldots,n_{A}$ and $h^{\mu}(t,x,x_{z})$,
$\mu=1,\ldots,n_{B}$ have both maximal rank $n_{A}$ and $n_{B}$,
the boundary conditions describe regular submanifolds $\mathcal{S}_{A}^{1}\subset\mathcal{B}_{A}$
and $\mathcal{S}_{B}^{1}\subset\mathcal{B}_{B}$.

Within this paper, we consider only smooth solutions of PDEs. A smooth
section $\gamma:\bar{\Omega}\times\mathbb{R}^{+}\rightarrow\mathcal{E}$
\[
\begin{array}{ccccccc}
z & = & z & \quad & x^{\alpha} & = & \gamma_{x}^{\alpha}(z,t)\\
t & = & t &  & u & = & \gamma_{u}(z,t)
\end{array}
\]
of the bundle $(\mathcal{E},\pi,\bar{\Omega}\times\mathbb{R}^{+})$
is a solution of the PDEs (\ref{eq:Nonlinear_PDE}) with the boundary
conditions (\ref{eq:Nonlinear_PDE_BC}) if and only if its second
prolongation $j^{2}(\gamma):\bar{\Omega}\times\mathbb{R}^{+}\rightarrow J^{2}(\mathcal{E})$,
given in coordinates by 
\begin{equation}
\begin{array}{cclcccl}
z & = & z & \quad & x^{\alpha} & = & \gamma_{x}^{\alpha}(z,t)\\
t & = & t &  & u & = & \gamma_{u}(z,t)\\
\\
x_{z}^{\alpha} & = & \partial_{z}\gamma_{x}^{\alpha}(z,t) & \quad & x_{zz}^{\alpha} & = & \partial_{z}^{2}\gamma_{x}^{\alpha}(z,t)\\
x_{t}^{\alpha} & = & \partial_{t}\gamma_{x}^{\alpha}(z,t) &  & x_{zt}^{\alpha} & = & \partial_{t}\partial_{z}\gamma_{x}^{\alpha}(z,t)\\
u_{z} & = & \partial_{z}\gamma_{u}(z,t) &  & x_{tt}^{\alpha} & = & \partial_{t}^{2}\gamma_{x}^{\alpha}(z,t)\\
u_{t} & = & \partial_{t}\gamma_{u}(z,t) &  & u_{zz} & = & \partial_{z}^{2}\gamma_{u}(z,t)\\
 &  &  &  & u_{zt} & = & \partial_{t}\partial_{z}\gamma_{u}(z,t)\\
 &  &  &  & u_{tt} & = & \partial_{t}^{2}\gamma_{u}(z,t)\,,
\end{array}\label{eq:Nonlinear_PDE_section}
\end{equation}
satisfies 
\begin{equation}
\begin{array}{rcc}
\left(x_{t}^{\alpha}-f^{\alpha}(z,t,x,x_{z},x_{zz},u)\right)\circ j^{2}(\gamma) & = & 0\\
u_{z}\circ j^{2}(\gamma) & = & 0\\
u_{zz}\circ j^{2}(\gamma) & = & 0\\
u_{zt}\circ j^{2}(\gamma) & = & 0
\end{array}\label{eq:Nonlinear_PDE_solution}
\end{equation}
on $\Omega\times\mathbb{R}^{+}$, as well as 
\begin{equation}
\begin{array}{ccc}
\left.g^{\lambda}(t,x,x_{z})\circ j^{1}(\gamma)\right|_{z=0} & = & 0\\
\left.h^{\mu}(t,x,x_{z})\circ j^{1}(\gamma)\right|_{z=1} & = & 0
\end{array}\label{eq:Nonlinear_PDE_BC_solution}
\end{equation}
on $0\times\mathbb{R}^{+}$ and $1\times\mathbb{R}^{+}$, respectively.
Because of the last three equations in (\ref{eq:Nonlinear_PDE_solution}),
a section (\ref{eq:Nonlinear_PDE_section}) can only be a solution
if $\gamma_{u}$ is independent of $z$.

The condition (\ref{eq:Nonlinear_PDE_solution}) is equivalent to
the statement that the image of $\Omega\times\mathbb{R}^{+}$ under
the map $j^{2}(\gamma)$, written as $j^{2}(\gamma)(\Omega\times\mathbb{R}^{+})$,
must lie entirely in the submanifold $\mathcal{S}^{2}\subset J^{2}(\mathcal{E})$
determined by the equations (\ref{eq:Nonlinear_PDE_geometric}), see
\cite{Olver1993}. Likewise, condition (\ref{eq:Nonlinear_PDE_BC_solution})
is equivalent to the statement that the images of the boundaries $0\times\mathbb{R}^{+}$
and $1\times\mathbb{R}^{+}$ under the restricted maps $\left.j^{1}(\gamma)\right|_{z=0}$
and $\left.j^{1}(\gamma)\right|_{z=1}$, written as $\left.j^{1}(\gamma)\right|_{z=0}(0\times\mathbb{R}^{+})$
and $\left.j^{1}(\gamma)\right|_{z=1}(1\times\mathbb{R}^{+})$, must
lie entirely in the submanifolds $\mathcal{S}_{A}^{1}\subset\mathcal{B}_{A}$
and $\mathcal{S}_{B}^{1}\subset\mathcal{B}_{B}$ determined by the
boundary conditions.%

\section{\label{sec:SymmetryGroups}Symmetry Groups}

For an extensive introduction to Lie groups, transformation groups,
and symmetry groups of differential equations, we refer to \cite{Olver1993}.
In the following, we briefly recapitulate some basics. First, a Lie
group is a group that carries the structure of a smooth manifold,
so the group elements can be continuously varied. More precisely,
an $r$-parameter Lie group carries the structure of an $r$-dimensional
manifold in such a way that both the group operation and the inversion
are smooth maps between manifolds. Second, a transformation group
acting on some manifold $\mathcal{M}$ is a Lie group $G$ together
with a map from (an open subset of) $G\times\mathcal{M}$ to $\mathcal{M}$
that satisfies certain properties. Thus, to each group element $g\in G$
there is associated a map from $\mathcal{M}$ to itself, and this
map is a diffeomorphism on $\mathcal{M}$ (where it is defined). An
important example of a (1-parameter) transformation group is the flow
$\Phi_{\varepsilon}$ of a vector field $v$ defined on $\mathcal{M}$.
Here the Lie group is an interval $I_{0}\subset\mathbb{R}$ containing
0, and for every $\varepsilon\in I_{0}$, $\Phi_{\varepsilon}$ is
a diffeomorphism on $\mathcal{M}$. We shall use this type of transformation
group throughout the paper. Finally, a symmetry group of a system
of PDEs (\ref{eq:Nonlinear_PDE}) is, roughly speaking, a transformation
group acting on the space of independent and dependent variables $\mathcal{E}$
that maps solutions onto solutions. The following definition of a
symmetry group can be found in \cite{Olver1993}.
\begin{defn}
\label{def:SymmetryGroup}%
A symmetry group of the system (\ref{eq:Nonlinear_PDE}) is a local
group of transformations $G$ acting on an open subset of the space
of independent and dependent variables $\mathcal{E}$, with the property
that whenever $\gamma$ is a solution of (\ref{eq:Nonlinear_PDE}),
and whenever $g\cdot\gamma$ with $g\in G$ is defined, then $g\cdot\gamma$
is also a solution of (\ref{eq:Nonlinear_PDE}). Here $g\cdot\gamma$
denotes the application of the diffeomorphism associated with the
group element $g$ to the solution $\gamma$. 
\end{defn}

Instead of considering arbitrary transformation groups acting on $\mathcal{E}$,
for our control-theoretic application we make two simplifications.
First, we consider only transformation groups that do not affect the
independent variables. With respect to the bundle $(\mathcal{E},\pi,\bar{\Omega}\times\mathbb{R}^{+})$,
this means that the transformations shift points of $\mathcal{E}$
only in vertical direction, i.e., tangent to the fibers. For this
reason, we also speak of vertical transformation groups. Second, for
proving that a system is not observable, it is sufficient to consider
only 1-parameter transformation groups, where the group elements can
be varied by a single group parameter. We denote such a vertical 1-parameter
transformation group that acts on $\mathcal{E}$ by $\Phi_{\varepsilon}$,
with the group parameter $\varepsilon$. In coordinates, it is given
by 
\begin{equation}
\Phi_{\varepsilon}:(z,t,x,u)\rightarrow(z,t,\Phi_{x,\varepsilon}(z,t,x,u),\Phi_{u,\varepsilon}(z,t,x,u))\,.\label{eq:VerticalTransformationGroup}
\end{equation}
For every $\varepsilon$ in some interval $I_{0}\subset\mathbb{R}$
containing zero, (\ref{eq:VerticalTransformationGroup}) is a diffeomorphism
on $\mathcal{E}$, and for $\varepsilon=0$ it is the identity map.

Every vertical 1-parameter transformation group $\Phi_{\varepsilon}$
that acts on $\mathcal{E}$ is generated by a vector field
\begin{equation}
v=v_{x}^{\alpha}(z,t,x,u)\partial_{x^{\alpha}}+v_{u}(z,t,x,u)\partial_{u}\label{eq:VerticalTransformationGroup_InfGen}
\end{equation}
on $\mathcal{E}$. This vector field is called the infinitesimal generator,
and can be calculated from the coordinate representation (\ref{eq:VerticalTransformationGroup})
of $\Phi_{\varepsilon}$ via the relation 
\begin{equation}
v=\left(\left.\partial_{\varepsilon}\Phi_{x,\varepsilon}^{\alpha}\right|_{\varepsilon=0}\right)\partial_{x^{\alpha}}+\left(\left.\partial_{\varepsilon}\Phi_{u,\varepsilon}\right|_{\varepsilon=0}\right)\partial_{u}\,.\label{eq:InfGenerator}
\end{equation}
Since we consider a transformation group (\ref{eq:VerticalTransformationGroup})
that does not affect the independent variables, the infinitesimal
generator is a vertical vector field, which means that it is tangent
to the fibers of the bundle $(\mathcal{E},\pi,\bar{\Omega}\times\mathbb{R}^{+})$.
The transformation group $\Phi_{\varepsilon}$ is just the flow of
this vector field, with the flow parameter $\varepsilon$. This one-to-one
correspondence between 1-parameter transformation groups and their
infinitesimal generators is very useful for the calculation of symmetry
groups. The conditions, which a transformation group $\Phi_{\varepsilon}$
must satisfy to be a symmetry group of a system of PDEs, can be formulated
in terms of its infinitesimal generator $v$. Since the transformation
group operates on $\mathcal{E}$ but the PDEs (\ref{eq:Nonlinear_PDE})
determine (algebraic) equations on $J^{2}(\mathcal{E})$, these conditions
involve the second prolongation
\[
\begin{array}{ccl}
j^{2}(v) & = & v_{x}^{\alpha}\partial_{x^{\alpha}}+v_{u}\partial_{u}+\\
 &  & +d_{z}(v_{x}^{\alpha})\partial_{x_{z}^{\alpha}}+d_{t}(v_{x}^{\alpha})\partial_{x_{t}^{\alpha}}+\\
 &  & +d_{z}(v_{u})\partial_{u_{z}}+d_{t}(v_{u})\partial_{u_{t}}+\\
 &  & +d_{zz}(v_{x}^{\alpha})\partial_{x_{zz}^{\alpha}}+d_{zt}(v_{x}^{\alpha})\partial_{x_{zt}^{\alpha}}+d_{tt}(v_{x}^{\alpha})\partial_{x_{tt}^{\alpha}}+\\
 &  & +d_{zz}(v_{u})\partial_{u_{zz}}+d_{zt}(v_{u})\partial_{u_{zt}}+d_{tt}(v_{u})\partial_{u_{tt}}
\end{array}
\]
of $v$. Here
\[
d_{z}=\partial_{z}+x_{z}^{\alpha}\partial_{x^{\alpha}}+u_{z}\partial_{u}+x_{zz}^{\alpha}\partial_{x_{z}^{\alpha}}+x_{zt}^{\alpha}\partial_{x_{t}^{\alpha}}+u_{zz}\partial_{u_{z}}+u_{zt}\partial_{u_{t}}
\]
and
\[
d_{t}=\partial_{t}+x_{t}^{\alpha}\partial_{x^{\alpha}}+u_{t}\partial_{u}+x_{zt}^{\alpha}\partial_{x_{z}^{\alpha}}+x_{tt}^{\alpha}\partial_{x_{t}^{\alpha}}+u_{zt}\partial_{u_{z}}+u_{tt}\partial_{u_{t}}
\]
are the total derivatives with respect to $z$ and $t$. For repeated
total derivatives of a function $\varphi(z,t,x,u)$ we use the abbreviations
$d_{zz}(\varphi)=d_{z}(d_{z}(\varphi))$, $d_{zt}(\varphi)=d_{t}(d_{z}(\varphi))$,
and $d_{tt}(\varphi)=d_{t}(d_{t}(\varphi))$.

The vector field $j^{2}(v)$ is defined on $J^{2}(\mathcal{E})$,
and it is the infinitesimal generator of the second prolongation $j^{2}(\Phi_{\varepsilon}):J^{2}(\mathcal{E})\rightarrow J^{2}(\mathcal{E})$
of $\Phi_{\varepsilon}$, which is a transformation group on $J^{2}(\mathcal{E})$.
The coordinate representation of $j^{2}(\Phi_{\varepsilon})$ can
be obtained from (\ref{eq:VerticalTransformationGroup}) by adding
all first and second total derivatives of $\Phi_{x,\varepsilon}(z,t,x,u)$
and $\Phi_{u,\varepsilon}(z,t,x,u)$ with respect to $z$ and $t$.%

The following theorem provides conditions which ensure that a vertical
vector field generates a 1-parameter symmetry group of the system
(\ref{eq:Nonlinear_PDE}) with boundary conditions (\ref{eq:Nonlinear_PDE_BC}).
It should be noted that in \cite{Olver1993} only the case without
boundary conditions is considered. Therefore, we need additional conditions,
which ensure that the transformation group does not violate the boundary
conditions. 
\begin{thm}
\label{thm:SymmetryGroup}If the prolongations of a smooth vector
field (\ref{eq:VerticalTransformationGroup_InfGen}) satisfy the conditions
\begin{equation}
\begin{array}{rcc}
L_{j^{2}(v)}\left(x_{t}^{\alpha}-f^{\alpha}(z,t,x,x_{z},x_{zz},u)\right) & = & 0\\
L_{j^{2}(v)}u_{z} & = & 0\\
L_{j^{2}(v)}u_{zz} & = & 0\\
L_{j^{2}(v)}u_{zt} & = & 0
\end{array}\label{eq:SymmetryGroup_Cond_PDE}
\end{equation}
on the submanifold $\mathcal{S}^{2}\subset J^{2}(\mathcal{E})$, and
the conditions 
\begin{equation}
\begin{array}{ccl}
\left.L_{j^{1}(v)}g^{\lambda}(t,x,x_{z})\right|_{z=0} & = & 0\\
\left.L_{j^{1}(v)}h^{\mu}(t,x,x_{z})\right|_{z=1} & = & 0
\end{array}\label{eq:SymmetryGroup_Cond_BC}
\end{equation}
on the submanifolds $\mathcal{S}_{A}^{1}\subset\mathcal{B}_{A}$ and
$\mathcal{S}_{B}^{1}\subset\mathcal{B}_{B}$, then it is the infinitesimal
generator of a vertical 1-parameter symmetry group of the system (\ref{eq:Nonlinear_PDE})
with the boundary conditions (\ref{eq:Nonlinear_PDE_BC}). 
\end{thm}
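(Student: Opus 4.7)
The plan is to exploit the one-to-one correspondence between the vertical vector field $v$ in (\ref{eq:VerticalTransformationGroup_InfGen}) and its flow $\Phi_\varepsilon$, which is a vertical 1-parameter transformation group on $\mathcal{E}$ of the form (\ref{eq:VerticalTransformationGroup}) for $\varepsilon$ in some open interval around zero. By standard prolongation theory as in \cite{Olver1993}, the second prolongation $j^2(\Phi_\varepsilon)$ is precisely the flow of $j^2(v)$ on $J^2(\mathcal{E})$, and for every section $\gamma$ one has the defining identity $j^2(\Phi_\varepsilon\circ\gamma)=j^2(\Phi_\varepsilon)\circ j^2(\gamma)$, with an analogous identity at first order for the restrictions to $z=0$ and $z=1$. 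To verify Definition \ref{def:SymmetryGroup} it is therefore sufficient to establish (i) $j^2(\Phi_\varepsilon)(\mathcal{S}^2)\subset\mathcal{S}^2$ and (ii) $\left.j^1(\Phi_\varepsilon)\right|_{z=0}(\mathcal{S}_A^1)\subset\mathcal{S}_A^1$ together with $\left.j^1(\Phi_\varepsilon)\right|_{z=1}(\mathcal{S}_B^1)\subset\mathcal{S}_B^1$; for then, whenever a section $\gamma$ meets (\ref{eq:Nonlinear_PDE_solution})--(\ref{eq:Nonlinear_PDE_BC_solution}), so does $\Phi_\varepsilon\circ\gamma$.

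For step (i), I would apply the classical infinitesimal invariance criterion for regular submanifolds: a submanifold cut out by functionally independent equations $F^i=0$ is invariant under the flow of a vector field $V$ if and only if $L_V F^i=0$ holds on $\{F^j=0\}$ for each $i$. The maximal rank assumption placed on the $n_x+3$ defining functions after (\ref{eq:Nonlinear_PDE_geometric}) guarantees that $\mathcal{S}^2$ is precisely such a regular submanifold, and the hypothesis (\ref{eq:SymmetryGroup_Cond_PDE}) is exactly the infinitesimal invariance condition for $j^2(v)$ tested against those defining functions. This yields (i) for every $\varepsilon$ in a sufficiently small interval around zero.

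For step (ii) the argument is structurally the same but needs one additional observation. Because $v$ is vertical, its first prolongation $j^1(v)$ contains no $\partial_z$ or $\partial_t$ component, so the restriction of $j^1(v)$ to the slice $\{z=0\}\subset J^1(\mathcal{E})$ is tangent to that slice and, under the natural identification of this slice with $\mathcal{B}_A$, defines a genuine vector field on $\mathcal{B}_A$ whose flow is $\left.j^1(\Phi_\varepsilon)\right|_{z=0}$; the situation at $z=1$ is identical with $\mathcal{B}_B$. Since $g^\lambda$ and $h^\mu$ depend only on coordinates of $\mathcal{B}_A$ and $\mathcal{B}_B$, the Lie derivatives in (\ref{eq:SymmetryGroup_Cond_BC}) coincide with the Lie derivatives of these induced vector fields applied to the defining functions of $\mathcal{S}_A^1$ and $\mathcal{S}_B^1$. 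The maximal rank hypotheses on the $g^\lambda$ and $h^\mu$ make these regular submanifolds, and one further application of the invariance criterion finishes the argument. The main technical subtlety I expect is precisely this boundary step, since \cite{Olver1993} does not treat boundary conditions; the delicate point is to verify rigorously that freezing $z$ in a prolonged vertical vector field really does yield the relevant vector field on $\mathcal{B}_A$ or $\mathcal{B}_B$, and that (\ref{eq:SymmetryGroup_Cond_BC}) is the correct infinitesimal invariance condition on those boundary manifolds.
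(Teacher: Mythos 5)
Your proposal is correct and follows essentially the same route as the paper's own proof: tangency of $j^{2}(v)$ to $\mathcal{S}^{2}$ (and of the restricted $j^{1}(v)$ to $\mathcal{S}_{A}^{1}$, $\mathcal{S}_{B}^{1}$) gives invariance of these submanifolds under the prolonged flow, and the identity $j^{2}(\Phi_{\varepsilon}\circ\gamma)=j^{2}(\Phi_{\varepsilon})\circ j^{2}(\gamma)$ then transports solutions to solutions. You are somewhat more explicit than the paper about the role of the maximal-rank assumptions and about why verticality makes $\left.j^{1}(v)\right|_{z=0}$ a genuine vector field on $\mathcal{B}_{A}$, but the underlying argument is the same.
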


\begin{proof}
The condition (\ref{eq:SymmetryGroup_Cond_PDE}) ensures that $v$
is the infinitesimal generator of a symmetry group of the system (\ref{eq:Nonlinear_PDE})
without boundary conditions, see \cite{Olver1993}. Geometrically,
the condition that the Lie derivatives (\ref{eq:SymmetryGroup_Cond_PDE})
vanish on the submanifold $\mathcal{S}^{2}\subset J^{2}(\mathcal{E})$
means that the vector field $j^{2}(v)$ is tangent to $\mathcal{S}^{2}$.
Therefore, the corresponding transformation group $j^{2}(\Phi_{\varepsilon}):J^{2}(\mathcal{E})\rightarrow J^{2}(\mathcal{E})$
has the property 
\begin{equation}
j^{2}(\Phi_{\varepsilon})(\mathcal{S}^{2})\subset\mathcal{S}^{2}\,,\label{eq:j2PeS2_subset_S2}
\end{equation}
i.e. it maps all points of $\mathcal{S}^{2}$ again on $\mathcal{S}^{2}$.
As already remarked before, a section $\gamma:\bar{\Omega}\times\mathbb{R}^{+}\mathcal{\rightarrow\mathcal{E}}$
of the bundle $(\mathcal{E},\pi,\bar{\Omega}\times\mathbb{R}^{+})$
is a solution of (\ref{eq:Nonlinear_PDE}) if and only if the image
of $\Omega\times\mathbb{R}^{+}$ under the prolonged section $j^{2}(\gamma)$
lies in $\mathcal{S}^{2}$. If $j^{2}(\gamma)(\Omega\times\mathbb{R}^{+})$
lies in $\mathcal{S}^{2}$, then because of (\ref{eq:j2PeS2_subset_S2})
also 
\[
j^{2}(\Phi_{\varepsilon}\circ\gamma)(\Omega\times\mathbb{R}^{+})=j^{2}(\Phi_{\varepsilon})\circ j^{2}(\gamma)(\Omega\times\mathbb{R}^{+})
\]
lies in $\mathcal{S}^{2}$. Thus, the deformed section $\Phi_{\varepsilon}\circ\gamma$
is also a solution of (\ref{eq:Nonlinear_PDE}).

The additional condition (\ref{eq:SymmetryGroup_Cond_BC}) ensures
that the new solution $\Phi_{\varepsilon}\circ\gamma$ also satisfies
the boundary conditions (\ref{eq:Nonlinear_PDE_BC}). The proof relies
on the same arguments as before. First, it should be noted that the
restrictions $\left.j^{1}(v)\right|_{z=0}$ and $\left.j^{1}(v)\right|_{z=1}$
of the vector field $j^{1}(v)$ are vector fields on the manifolds
$\mathcal{B}_{A}$ and $\mathcal{B}_{B}$. Geometrically, the condition
(\ref{eq:SymmetryGroup_Cond_BC}) means that $\left.j^{1}(v)\right|_{z=0}$
and $\left.j^{1}(v)\right|_{z=1}$ are tangent to the submanifolds
$\mathcal{S}_{A}^{1}\subset\mathcal{B}_{A}$ and $\mathcal{S}_{B}^{1}\subset\mathcal{B}_{B}$
determined by the boundary conditions (\ref{eq:Nonlinear_PDE_BC}).
Therefore, the transformation group generated by $v$ maps solutions
that satisfy the boundary conditions again on solutions that satisfy
the boundary conditions. 
\end{proof}
\begin{rem}
It should be noted that the conditions of Theorem \ref{thm:SymmetryGroup}
are only sufficient conditions. They are not necessary, since we do
not make the assumption of local solvability. For the case without
boundary conditions, which is discussed in \cite{Olver1993}, local
solvability means, roughly speaking, that through every point of the
submanifold $\mathcal{S}^{2}\subset J^{2}(\mathcal{E})$ there passes
a solution of the PDEs (\ref{eq:Nonlinear_PDE}). With this assumption,
the conditions (\ref{eq:SymmetryGroup_Cond_PDE}) become necessary
and sufficient. 
\end{rem}

\section{\label{sec:ObservabilityAnalysis}Application to the Observability
Analysis}

With respect to the observability problem, we are interested in symmetry
groups that deform the solutions without changing the trajectories
of input and output. If we can find such a symmetry group $\Phi_{\varepsilon}$,
then we can transform every solution $\gamma$ into other solutions
$\Phi_{\varepsilon}\circ\gamma$ with different initial conditions
but the same input $u(t)$ and output $y(t)$. Thus, the initial condition
can never be determined uniquely from the input and the output.

To construct a symmetry group that does not change the input trajectory,
we simply have to set the component $v_{u}(z,t,x,u)$ of the infinitesimal
generator (\ref{eq:VerticalTransformationGroup_InfGen}) to zero,
i.e. we must consider vector fields of the form $v=v_{x}^{\alpha}(z,t,x,u)\partial_{x^{\alpha}}$.
The second requirement \textendash{} invariance of the output trajectory
\textendash{} means that the symmetry group must satisfy
\[
\left.c(t,x,x_{z})\circ j^{1}(\gamma)\right|_{z=z_{0}}=\left.c(t,x,x_{z})\circ j^{1}(\Phi_{\varepsilon}\circ\gamma)\right|_{z=z_{0}}
\]
for all solutions $\gamma$ and all $\varepsilon\in I_{0}$ in some
interval $I_{0}\subset\mathbb{R}$ containing zero, i.e. the output
must be the same for the solution $\gamma$ and all solutions $\Phi_{\varepsilon}\circ\gamma$
parametrized by the group parameter $\varepsilon$. Because of $j^{1}(\Phi_{\varepsilon}\circ\gamma)=j^{1}(\Phi_{\varepsilon})\circ j^{1}(\gamma)$
and the fact that $j^{1}(v)$ is the infinitesimal generator of $j^{1}(\Phi_{\varepsilon})$,
this condition holds if the Lie derivative 
\[
\left.L_{j^{1}(v)}c(t,x,x_{z})\right|_{z=z_{0}}
\]
vanishes at $z=z_{0}$. The following theorem summarizes our results.%
\begin{thm}
\label{thm:NonObservability}Consider the system (\ref{eq:Nonlinear_PDE})
with boundary conditions (\ref{eq:Nonlinear_PDE_BC}) and output (\ref{eq:Nonlinear_PDE_y}).
If there exists a smooth vector field 
\begin{equation}
v=v_{x}^{\alpha}(z,t,x,u)\partial_{x^{\alpha}}\label{eq:NonObservability_InfGen}
\end{equation}
on $\mathcal{E}$ with 
\begin{equation}
\left.\partial_{u}v_{x}^{\alpha}\right|_{t=0}=0\,,\quad\alpha=1,\ldots,n_{x}\label{eq:NonObservability_InfGen_InitialCond}
\end{equation}
and 
\begin{equation}
\left.v\right|_{t=0}\neq0\label{eq:NonObservability_InfGen_InitialCond_2}
\end{equation}
that satisfies the conditions
\begin{equation}
\begin{array}{rcc}
L_{j^{2}(v)}\left(x_{t}^{\alpha}-f^{\alpha}(z,t,x,x_{z},x_{zz},u)\right) & = & 0\end{array}\label{eq:NonObservability_DomainCond}
\end{equation}
on the submanifold $\mathcal{S}^{2}\subset J^{2}(\mathcal{E})$, the
conditions
\begin{equation}
\begin{array}{ccl}
\left.L_{j^{1}(v)}g^{\lambda}(t,x,x_{z})\right|_{z=0} & = & 0\\
\left.L_{j^{1}(v)}h^{\mu}(t,x,x_{z})\right|_{z=1} & = & 0
\end{array}\label{eq:NonObservability_BoundaryCond}
\end{equation}
on the submanifolds $\mathcal{S}_{A}^{1}\subset\mathcal{B}_{A}$ and
$\mathcal{S}_{B}^{1}\subset\mathcal{B}_{B}$, and the condition 
\begin{equation}
\left.L_{j^{1}(v)}c(t,x,x_{z})\right|_{z=z_{0}}=0\,,\label{eq:NonObservability_OutputCond}
\end{equation}
then the system is not observable. 
\end{thm}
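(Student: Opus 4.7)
The plan is to invoke Theorem~\ref{thm:SymmetryGroup} in order to obtain a 1-parameter symmetry group $\Phi_{\varepsilon}$, and then show that it produces from every solution an entire $\varepsilon$-family of solutions with identical input and output trajectories but distinct initial profiles. First, I would note that the three auxiliary conditions $L_{j^{2}(v)}u_{z}=L_{j^{2}(v)}u_{zz}=L_{j^{2}(v)}u_{zt}=0$ of (\ref{eq:SymmetryGroup_Cond_PDE}) are automatic here, because $v_{u}\equiv 0$ forces $d_{z}(v_{u})=d_{zz}(v_{u})=d_{zt}(v_{u})=0$ identically on $J^{2}(\mathcal{E})$. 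Combining this with (\ref{eq:NonObservability_DomainCond}) and (\ref{eq:NonObservability_BoundaryCond}), Theorem~\ref{thm:SymmetryGroup} applies and yields a vertical 1-parameter symmetry group $\Phi_{\varepsilon}$ of (\ref{eq:Nonlinear_PDE})--(\ref{eq:Nonlinear_PDE_BC}), so $\Phi_{\varepsilon}\circ\gamma$ is again a solution whenever $\gamma$ is.

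Next I would verify that both input and output are preserved under $\Phi_{\varepsilon}$. Because $v_{u}\equiv 0$, the flow leaves the $u$-coordinate invariant, hence $\Phi_{u,\varepsilon}(z,t,x,u)=u$ for every $\varepsilon$, so the input component of $\Phi_{\varepsilon}\circ\gamma$ coincides with that of $\gamma$. For the output, verticality of $j^{1}(\Phi_{\varepsilon})$ keeps its orbit through $j^{1}(\gamma)(z_{0},t)$ inside the slice $z=z_{0}$; along this orbit the $\varepsilon$-derivative of $c$ equals $L_{j^{1}(v)}c$, which vanishes at $z=z_{0}$ by (\ref{eq:NonObservability_OutputCond}). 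Integrating in $\varepsilon$ then gives $c\circ j^{1}(\Phi_{\varepsilon}\circ\gamma)|_{z=z_{0}}=c\circ j^{1}(\gamma)|_{z=z_{0}}$, so the output trajectory is also unchanged.

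The crux is to extract from this deformation a genuine pair of indistinguishable initial conditions, and this is where the hypotheses (\ref{eq:NonObservability_InfGen_InitialCond}) and (\ref{eq:NonObservability_InfGen_InitialCond_2}) at $t=0$ come in. The initial profile of the deformed solution is $\bar{\gamma}_{x}(z,0)=\Phi_{x,\varepsilon}(z,0,\gamma_{x}(z,0),\gamma_{u}(0))$, which a priori could depend on $\gamma_{u}(0)$ and therefore not qualify as an input-independent initial condition. To rule this out, I would restrict the flow ODE of $\Phi_{x,\varepsilon}$ to $t=0$, obtaining
\begin{equation*}
\partial_{\varepsilon}\Phi_{x,\varepsilon}^{\alpha}(z,0,x,u)=v_{x}^{\alpha}(z,0,\Phi_{x,\varepsilon}(z,0,x,u),u)\,,
\end{equation*}
and observe that (\ref{eq:NonObservability_InfGen_InitialCond}) makes the right-hand side independent of $u$. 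By uniqueness of ODE solutions the same property propagates to $\Phi_{x,\varepsilon}(z,0,x,u)$ itself, so $\bar{\gamma}_{x}(\cdot,0)$ depends only on $\gamma_{x}(\cdot,0)$. Finally, (\ref{eq:NonObservability_InfGen_InitialCond_2}) ensures that the flow at $t=0$ is not everywhere stationary, so for sufficiently small $\varepsilon\neq 0$ there exist solutions $\gamma$ for which $\bar{\gamma}_{x}(\cdot,0)\neq\gamma_{x}(\cdot,0)$.

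Assembling the pieces, for every admissible input $u(\cdot)$ and every such $\gamma$ the deformed section $\Phi_{\varepsilon}\circ\gamma$ is itself a solution with the same input $u(\cdot)$, the same output $y(\cdot)$, and an initial profile $\bar{\gamma}_{x}(\cdot,0)\neq\gamma_{x}(\cdot,0)$ that does not depend on the particular input chosen. By well-posedness the solution starting from $\bar{\gamma}_{x}(\cdot,0)$ under input $u(\cdot)$ must coincide with $\Phi_{\varepsilon}\circ\gamma$, hence produces the same output as $\gamma$. Thus $\gamma_{x}(\cdot,0)$ and $\bar{\gamma}_{x}(\cdot,0)$ are indistinguishable for every admissible input, so the system is not observable. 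The main obstacle I expect is precisely the step that upgrades the infinitesimal hypothesis $\partial_{u}v_{x}^{\alpha}|_{t=0}=0$ to a statement valid for the full flow; the ODE-uniqueness argument above is the cleanest route, and it crucially uses that $v_{u}\equiv 0$ holds globally, not merely on the slice $t=0$.
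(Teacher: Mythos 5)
Your proposal is correct and follows essentially the same route as the paper's proof: invoke Theorem~\ref{thm:SymmetryGroup} (noting the $u$-conditions are automatic since $v_{u}\equiv 0$), observe that the input and output trajectories are preserved, and use (\ref{eq:NonObservability_InfGen_InitialCond}) and (\ref{eq:NonObservability_InfGen_InitialCond_2}) to produce, for a given initial condition, a distinct input-independent transformed initial condition that is indistinguishable from it. The only difference is that where the paper merely asserts that (\ref{eq:NonObservability_InfGen_InitialCond}) makes $\Phi_{x,\varepsilon}\circ\gamma_{x}(z,0)$ independent of $\gamma_{u}(0)$, you justify it explicitly via uniqueness for the flow ODE restricted to $t=0$ --- a welcome elaboration, not a different argument.
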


\begin{proof}
Because of Theorem \ref{thm:SymmetryGroup}, a vector field (\ref{eq:NonObservability_InfGen})
that meets (\ref{eq:NonObservability_DomainCond}) and (\ref{eq:NonObservability_BoundaryCond})
generates a vertical 1-parameter symmetry group $\Phi_{\varepsilon}$
of the system (\ref{eq:Nonlinear_PDE}) with boundary conditions (\ref{eq:Nonlinear_PDE_BC}).
Since the vector field (\ref{eq:NonObservability_InfGen}) has no
components in $\partial_{u}$-direction, the conditions $L_{j^{2}(v)}u_{z}=0$,
$L_{j^{2}(v)}u_{zz}=0$, and $L_{j^{2}(v)}u_{zt}=0$ of Theorem \ref{thm:SymmetryGroup}
are always satisfied and do not need to be checked. For the same reason,
the symmetry group does not deform the trajectory of the input, and
because of condition (\ref{eq:NonObservability_OutputCond}) it does
not deform the trajectory of the output.

The condition (\ref{eq:NonObservability_InfGen_InitialCond}) means
that for $t=0$ the vector field is independent of the input $u$.
This ensures that the initial condition of a transformed solution
$\Phi_{\varepsilon}\circ\gamma$ depends only on the initial condition
$\gamma_{x}(z,0)$ of the original solution $\gamma=(z,t,\gamma_{x},\gamma_{u})$,
and not on the input $\gamma_{u}$ at time $t=0$.\footnote{For this reason, we often use the sloppy but convenient notation $\Phi_{x,\varepsilon}\circ\gamma_{x}(z,0)$
to express transformed initial conditions, even though $\Phi_{x,\varepsilon}(z,t,x,u)$
has of course more arguments than the variables $x$ that are determined
by $\gamma_{x}(z,0)$. Note also that we write $\gamma=(z,t,\gamma_{x},\gamma_{u})$
instead of just $\gamma=(\gamma_{x},\gamma_{u})$, since we defined
solutions geometrically as sections of the bundle $(\mathcal{E},\pi,\bar{\Omega}\times\mathbb{R}^{+})$,
and $\mathcal{E}$ has coordinates $(z,t,x,u)$.}

If we consider now two initial conditions $\gamma_{x}(z,0)$ and
\[
\bar{\gamma}_{x}(z,0)=\Phi_{x,\varepsilon}\circ\gamma_{x}(z,0)\,,
\]
where $\bar{\gamma}_{x}(z,0)$ is generated from $\gamma_{x}(z,0)$
by means of $\Phi_{\varepsilon}$ with some suitable value of the
group parameter $\varepsilon$, then they are clearly indistinguishable:
For every solution $\gamma=(z,t,\gamma_{x},\gamma_{u})$ with initial
condition $\gamma_{x}(z,0)$, because of the properties of the symmetry
group $\Phi_{\varepsilon}$ there exists a solution
\[
\begin{array}{cccccc}
\bar{\gamma} & = & \Phi_{\varepsilon}\circ\gamma & = & (z,t,\underbrace{\Phi_{x,\varepsilon}\circ\gamma}\,, & \underbrace{\gamma_{u}})\\
 &  &  &  & \bar{\gamma}_{x} & \bar{\gamma}_{u}
\end{array}
\]
with initial condition $\bar{\gamma}_{x}(z,0)$ that has the same
input $\bar{\gamma}_{u}=\gamma_{u}$ and yields the same output
\[
\left.c(t,x,x_{z})\circ j^{1}(\bar{\gamma})\right|_{z=z_{0}}=\left.c(t,x,x_{z})\circ j^{1}(\gamma)\right|_{z=z_{0}}\,.
\]
Consequently, for every initial condition $\gamma_{x}(z,0)$, the
symmetry group generates a set of indistinguishable initial conditions
\[
\mathcal{I}(\gamma_{x}(z,0))=\left\{ \left.\Phi_{x,\varepsilon}\circ\gamma_{x}(z,0)\right|\varepsilon\in I_{0}\right\} 
\]
by varying the group parameter $\varepsilon$ in some interval $I_{0}\subset\mathbb{R}$
containing zero. Since condition (\ref{eq:NonObservability_InfGen_InitialCond_2})
guarantees that the vector field (\ref{eq:NonObservability_InfGen})
does not vanish for $t=0$,\footnote{We have included the condition (\ref{eq:NonObservability_InfGen_InitialCond_2})
only for the sake of completeness. If the vector field (\ref{eq:NonObservability_InfGen})
would vanish for $t=0$, the corresponding symmetry group would generate
different solutions with the same initial condition and the same input
trajectory. However, since we have assumed that the solution is uniquely
determined by the initial condition and the input (see Section \ref{sec:GeometricRepresentation}),
this cannot happen.} for every initial condition $\gamma_{x}(z,0)$ the set of indistinguishable
initial conditions $\mathcal{I}(\gamma_{x}(z,0))$ contains more than
one element (in fact, infinitely many elements parametrized by $\varepsilon$),
and therefore the system is not observable. 
\end{proof}

It should be noted that, even though the original system is nonlinear,
the obtained conditions (\ref{eq:NonObservability_DomainCond}), (\ref{eq:NonObservability_BoundaryCond}),
and (\ref{eq:NonObservability_OutputCond}) are linear PDEs in the
unknown coefficients $v_{x}^{\alpha}$ of the vector field (\ref{eq:NonObservability_InfGen}).
In the following, we demonstrate the approach by means of two examples.

\subsection{A Simple Nonlinear Example}

Consider the system
\begin{equation}
\begin{array}{ccl}
\partial_{t}x^{1}(z,t) & = & x^{2}(z,t)\\
\partial_{t}x^{2}(z,t) & = & \partial_{z}^{2}x^{1}(z,t)-x^{2}(z,t)^{3}+u(t)
\end{array}\label{eq:NonlinEx_PDE}
\end{equation}
on the domain $\Omega=(0,1)$ with Neumann boundary conditions
\begin{equation}
\begin{array}{ccc}
\partial_{z}x^{1}(0,t) & = & 0\\
\partial_{z}x^{1}(1,t) & = & 0
\end{array}\label{eq:NonlinEx_BC}
\end{equation}
at $z=0$ and $z=1$, and the output
\begin{equation}
y=x^{2}(0,t)\label{eq:NonlinEx_Output}
\end{equation}
at $z_{0}=0$. This system is a wave equation with a nonlinear damping
described by the term $-x^{2}(z,t)^{3}$, and reflecting boundary
conditions at both ends. The input $u(t)$ can be interpreted e.g.
as an equally distributed force density, and the output is the velocity
$x^{2}$ at the left end $z_{0}=0$.

It can be verified easily that the vector field
\begin{equation}
v=\partial_{x^{1}}\label{eq:NonlinEx_InfGen}
\end{equation}
satisfies the conditions (\ref{eq:NonObservability_DomainCond}),
(\ref{eq:NonObservability_BoundaryCond}), and (\ref{eq:NonObservability_OutputCond})
of Theorem \ref{thm:NonObservability}. Because of
\[
j^{1}(v)=j^{2}(v)=\partial_{x^{1}}\,,
\]
we have
\[
\begin{array}{rcc}
L_{j^{2}(v)}\left(x_{t}^{1}-x^{2}\right) & = & 0\\
L_{j^{2}(v)}\left(x_{t}^{2}-x_{zz}^{1}+(x^{2})^{3}-u\right) & = & 0
\end{array}
\]
(even on $J^{2}(\mathcal{E})$ and not only on $\mathcal{S}^{2}\subset J^{2}(\mathcal{E})$)
as well as
\[
\begin{array}{ccc}
\left.L_{j^{1}(v)}x_{z}^{1}\right|_{z=0} & = & 0\\
\left.L_{j^{1}(v)}x_{z}^{1}\right|_{z=1} & = & 0
\end{array}
\]
(even on $\mathcal{B}_{A}$ and $\mathcal{B}_{B}$, and not only on
$\mathcal{S}_{A}^{1}\subset\mathcal{B}_{A}$ and $\mathcal{S}_{B}^{1}\subset\mathcal{B}_{B}$)
and
\[
\left.L_{j^{1}(v)}x^{2}\right|_{z=0}=0\,.
\]
The other conditions (\ref{eq:NonObservability_InfGen_InitialCond})
and (\ref{eq:NonObservability_InfGen_InitialCond_2}) are obviously
also satisfied. Consequently, according to Theorem \ref{thm:NonObservability}
the system is not observable. The vector field (\ref{eq:NonlinEx_InfGen})
generates the symmetry group
\[
\Phi_{\varepsilon}:(z,t,x^{1},x^{2},u)\rightarrow(z,t,x^{1}+\varepsilon,x^{2},u)\,,
\]
which simply adds a constant offset to the value of $x^{1}$. Thus,
it maps a solution
\begin{equation}
\gamma=(z,t,\gamma_{x}^{1},\gamma_{x}^{2},\gamma_{u})\label{eq:NonlinEx_Sol}
\end{equation}
to the solution
\begin{equation}
\Phi_{\varepsilon}\circ\gamma=(z,t,\gamma_{x}^{1}+\varepsilon,\gamma_{x}^{2},\gamma_{u})\,,\label{eq:NonlinEx_TransSol}
\end{equation}
and the corresponding initial conditions $(\gamma_{x}^{1}(z,0),\gamma_{x}^{2}(z,0))$
and $(\gamma_{x}^{1}(z,0)+\varepsilon,\gamma_{x}^{2}(z,0))$ are indistinguishable.

In this example it is of course obvious that (\ref{eq:NonlinEx_TransSol})
is again a solution and produces the same output trajectory as (\ref{eq:NonlinEx_Sol}),
since in the PDEs (\ref{eq:NonlinEx_PDE}), the boundary conditions
(\ref{eq:NonlinEx_BC}), and the output (\ref{eq:NonlinEx_Output})
there appear only derivatives of $x^{1}$, but not its absolute value.

\subsection{An Academic Example}

Consider the system
\begin{equation}
\begin{array}{ccl}
\partial_{t}x^{1}(z,t) & = & x^{1}(z,t)x^{2}(z,t)\partial_{z}^{2}x^{1}(z,t)+u(t)\\
\partial_{t}x^{2}(z,t) & = & \partial_{z}x^{2}(z,t)-x^{2}(z,t)^{2}\partial_{z}^{2}x^{1}(z,t)+\\
 &  & +\tfrac{x^{2}(z,t)}{x^{1}(z,t)}\left(\partial_{z}x^{1}(z,t)-u(t)\right)
\end{array}\label{eq:AcademicEx_PDE}
\end{equation}
on the domain $\Omega=(0,1)$ with the boundary conditions
\begin{equation}
\begin{array}{rcc}
\partial_{z}x^{1}(0,t) & = & 0\\
x^{1}(0,t)x^{2}(0,t)-1 & = & 0
\end{array}\label{eq:AcademicEx_BC_0}
\end{equation}
at $z=0$ and
\begin{equation}
\begin{array}{ccc}
\partial_{z}x^{1}(1,t) & = & 0\end{array}\label{eq:AcademicEx_BC_1}
\end{equation}
at $z=1$. The output is
\begin{equation}
y=\partial_{z}x^{1}(\tfrac{1}{2},t)\label{eq:AcademicEx_Output}
\end{equation}
at $z_{0}=\tfrac{1}{2}$. It can be verified that the vector field
\begin{equation}
v=\partial_{x^{1}}-\tfrac{x^{2}}{x^{1}}\partial_{x^{2}}\label{eq:AcademicEx_InfGen}
\end{equation}
satisfies all conditions of Theorem \ref{thm:NonObservability}. First,
since the vector field (\ref{eq:AcademicEx_InfGen}) depends neither
on $t$ nor on $u$, the conditions (\ref{eq:NonObservability_InfGen_InitialCond})
and (\ref{eq:NonObservability_InfGen_InitialCond_2}) are certainly
met. The expressions for the prolongations $j^{1}(v)$ and $j^{2}(v)$
are too extensive to be presented here, but it is not hard to verify
with a computer algebra system that the Lie derivatives
\[
\begin{array}{r}
L_{j^{2}(v)}\left(x_{t}^{1}-x^{1}x^{2}x_{zz}^{1}-u\right)\\
L_{j^{2}(v)}\left(x_{t}^{2}-x_{z}^{2}+(x^{2})^{2}x_{zz}^{1}-\tfrac{x^{2}}{x^{1}}\left(x_{z}^{1}-u\right)\right)
\end{array}
\]
vanish on $\mathcal{S}^{2}\subset J^{2}(\mathcal{E})$, the Lie derivatives
\[
\begin{array}{l}
\left.L_{j^{1}(v)}x_{z}^{1}\right|_{z=0}\\
\left.L_{j^{1}(v)}(x^{1}x^{2}-1)\right|_{z=0}
\end{array}
\]
vanish on $\mathcal{B}_{A}$ (and not only on $\mathcal{S}_{A}^{1}\subset\mathcal{B}_{A}$),
and the Lie derivative
\[
\begin{array}{c}
\left.L_{j^{1}(v)}x_{z}^{1}\right|_{z=1}\end{array}
\]
vanishes on $\mathcal{B}_{B}$ (and not only on $\mathcal{S}_{B}^{1}\subset\mathcal{B}_{B}$).
Since the Lie derivative
\[
\left.L_{j^{1}(v)}x_{z}^{1}\right|_{z=\tfrac{1}{2}}
\]
of the output also vanishes, all conditions are met and the system
is not observable.

\section{\label{sec:LinearSystems}Linear Systems}

In this section, we shall discuss how the results of Section \ref{sec:ObservabilityAnalysis}
simplify for linear systems. In the nonlinear case, the conditions
of Theorem \ref{thm:NonObservability} are sufficient for non-observability.
For linear systems, we show that they become necessary and sufficient.
In other words, a linear system is not observable if and only if there
exists a symmetry group that does not change the trajectories of the
input and the output. We also show that the symmetry group approach
is closely related to the notion of non-observable subspace from infinite-dimensional
linear systems theory. Here it is important to remark that for infinite-dimensional
linear systems there exist the concepts of approximate and exact observability,
see \cite{CurtainZwart1995}. %
These concepts are defined in terms of the observability map, which
assigns to each initial condition the corresponding output trajectory
that is generated by the homogenous system without input. Approximate
observability means that the observability map is injective, whereas
exact observability requires in addition that the inverse of the observability
map is also bounded, and therefore continuous. Our definition of observability
via the non-existence of indistinguishable initial conditions corresponds
to the injectivity of the observability map, and therefore to approximate
observability in the sense of \cite{CurtainZwart1995}.

In the following, we consider linear time-invariant PDEs of the form
\begin{equation}
\begin{array}{ccc}
\partial_{t}x^{\alpha}(z,t) & = & A_{\beta}^{\alpha}(z)x^{\beta}(z,t)+A_{z,\beta}^{\alpha}(z)\partial_{z}x^{\beta}(z,t)+\\
 &  & +A_{zz,\beta}^{\alpha}(z)\partial_{z}^{2}x^{\beta}(z,t)+B^{\alpha}(z)u(t)\,,
\end{array}\label{eq:Linear_PDE}
\end{equation}
$\alpha=1,\ldots,n_{x}$, on a 1-dimensional spatial domain $\Omega=(0,1)\subset\mathbb{R}$
with boundary conditions 
\begin{equation}
\begin{array}{ccc}
G_{\beta}^{\lambda}x^{\beta}(0,t)+G_{z,\beta}^{\lambda}\partial_{z}x^{\beta}(0,t) & = & 0\,,\quad\lambda=1,\ldots,n_{A}\\
H_{\beta}^{\mu}x^{\beta}(1,t)+H_{z,\beta}^{\mu}\partial_{z}x^{\beta}(1,t) & = & 0\,,\quad\mu=1,\ldots,n_{B}
\end{array}\label{eq:Linear_PDE_BC}
\end{equation}
and an output function 
\begin{equation}
y(t)=C_{\beta}x^{\beta}(z_{0},t)+C_{z,\beta}\partial_{z}x^{\beta}(z_{0},t)\label{eq:Linear_PDE_Output}
\end{equation}
defined at some point $z_{0}\in\bar{\Omega}$. This system class is
a special case of the nonlinear systems considered in the previous
sections, and includes e.g. the heat equation and the wave equation,
with homogenous Dirichlet, Neumann, and Robin boundary conditions.
Geometrically, the PDEs (\ref{eq:Linear_PDE}) can be represented
as a submanifold $\mathcal{S}^{2}\subset J^{2}(\mathcal{E})$ described
by the equations 
\[
\begin{array}{rcl}
x_{t}^{\alpha}-A_{\beta}^{\alpha}(z)x^{\beta}-A_{z,\beta}^{\alpha}(z)x_{z}^{\beta}-\\
-A_{zz,\beta}^{\alpha}(z)x_{zz}^{\beta}-B^{\alpha}(z)u & = & 0\,,\quad\alpha=1,\ldots,n_{x}\\
u_{z} & = & 0\\
u_{zz} & = & 0\\
u_{zt} & = & 0\,.
\end{array}
\]
The additional equations for $u_{z}$, $u_{zz}$, and $u_{zt}$ again
incorporate that we are only interested in solutions where $u$ does
not depend on $z$.%
{} The boundary conditions (\ref{eq:Linear_PDE_BC}) are again equations
on manifolds $\mathcal{B}_{A}$ and $\mathcal{B}_{B}$ with coordinates
$(t,x,u,x_{z},x_{t},u_{z},u_{t})$, which describe submanifolds $\mathcal{S}_{A}^{1}\subset\mathcal{B}_{A}$
and $\mathcal{S}_{B}^{1}\subset\mathcal{B}_{B}$.

A fundamental difference to the nonlinear case is that for linear
systems it is sufficient to consider symmetry groups with infinitesimal
generators of the form
\begin{equation}
v=v_{x}^{\alpha}(z,t)\partial_{x^{\alpha}}+v_{u}(z,t)\partial_{u}\,,\label{eq:InfGen_linear_with_vu}
\end{equation}
where the coefficients only depend on the independent variables $z$
and $t$. This can be justified by the superposition principle: Because
of the superposition principle, for every pair of solutions $\gamma$
and $\bar{\gamma}$ we can construct a symmetry group 
\begin{multline}
\Phi_{\varepsilon}:(z,t,x,u)\rightarrow(z,t,x+(\bar{\gamma}_{x}(z,t)-\gamma_{x}(z,t))\varepsilon,\\
u+(\bar{\gamma}_{u}(z,t)-\gamma_{u}(z,t))\varepsilon)\label{eq:SymmetryGroup_Superposition}
\end{multline}
with infinitesimal generator 
\begin{equation}
v=(\bar{\gamma}_{x}^{\alpha}(z,t)-\gamma_{x}^{\alpha}(z,t))\partial_{x^{\alpha}}+(\bar{\gamma}_{u}(z,t)-\gamma_{u}(z,t))\partial_{u}\,,\label{eq:InfGen_Superposition}
\end{equation}
that deforms $\gamma$ for $\varepsilon=1$ into $\bar{\gamma}$.
Since the vector field (\ref{eq:InfGen_Superposition}) is of the
form (\ref{eq:InfGen_linear_with_vu}), we can construct every solution
$\bar{\gamma}$ by deforming a given solution $\gamma$ with a symmetry
group of this special type. Thus, for our application there is no
advantage in considering the general case, where the coefficients
of the infinitesimal generator (\ref{eq:InfGen_linear_with_vu}) may
also depend on $x$ and $u$. 
\begin{rem}
Note that (\ref{eq:SymmetryGroup_Superposition}) is indeed a symmetry
group in compliance with Definition \ref{def:SymmetryGroup}. Because
of the superposition principle, it transforms all solutions into other
solutions, and not only the solutions $\gamma$ and $\bar{\gamma}$
that were used to construct it. 
\end{rem}

Now let us apply Theorem \ref{thm:NonObservability} to the linear
case. First, we can replace the vector field (\ref{eq:NonObservability_InfGen})
by the vector field
\begin{equation}
v=v_{x}^{\alpha}(z,t)\partial_{x^{\alpha}}\,.\label{eq:InfGen_linear}
\end{equation}
With the second prolongation 
\[
\begin{array}{ccl}
j^{2}(v) & = & v^{\alpha}\partial_{x^{\alpha}}+d_{z}(v^{\alpha})\partial_{x_{z}^{\alpha}}+d_{t}(v^{\alpha})\partial_{x_{t}^{\alpha}}+\\
 &  & +d_{zz}(v^{\alpha})\partial_{x_{zz}^{\alpha}}+d_{zt}(v^{\alpha})\partial_{x_{zt}^{\alpha}}+d_{tt}(v^{\alpha})\partial_{x_{tt}^{\alpha}}
\end{array}
\]
of (\ref{eq:InfGen_linear}), an evaluation of the condition (\ref{eq:NonObservability_DomainCond})
with 
\[
f^{\alpha}=A_{\beta}^{\alpha}(z)x^{\beta}+A_{z,\beta}^{\alpha}(z)x_{z}^{\beta}+A_{zz,\beta}^{\alpha}(z)x_{zz}^{\beta}+B^{\alpha}(z)u
\]
yields 
\[
d_{t}(v^{\alpha})-A_{\beta}^{\alpha}(z)v^{\beta}-A_{z,\beta}^{\alpha}(z)d_{z}(v^{\beta})-A_{zz,\beta}^{\alpha}(z)d_{zz}(v^{\beta})=0\,.
\]
Since the coefficients of $v$ only depend on $z$ and $t$, the total
derivatives degenerate to partial derivatives, and we obtain
\begin{equation}
\begin{array}{ccr}
\partial_{t}v^{\alpha}(z,t) & = & A_{\beta}^{\alpha}(z)v^{\beta}(z,t)+A_{z,\beta}^{\alpha}(z)\partial_{z}v^{\beta}(z,t)+\\
 &  & +A_{zz,\beta}^{\alpha}(z)\partial_{z}^{2}v^{\beta}(z,t)\,.
\end{array}\label{eq:NonObservability_DomainCond_linear}
\end{equation}
This is just the homogenous part of the original PDEs (\ref{eq:Linear_PDE}).
Note that in (\ref{eq:NonObservability_DomainCond_linear}) there
appear no variables $x_{t}^{\alpha}$, and therefore it makes no difference
whether we evaluate (\ref{eq:NonObservability_DomainCond_linear})
on the submanifold $\mathcal{S}^{2}\subset J^{2}(\mathcal{E})$ determined
by the system equations, or on $J^{2}(\mathcal{E})$ itself. If the
conditions hold on $\mathcal{S}^{2}$, then they also hold on $J^{2}(\mathcal{E})$.
Next, an evaluation of the condition (\ref{eq:NonObservability_BoundaryCond})
with 
\[
\begin{array}{ccc}
g^{\lambda} & = & G_{\beta}^{\lambda}x^{\beta}+G_{z,\beta}^{\lambda}x_{z}^{\beta}\\
h^{\mu} & = & H_{\beta}^{\mu}x^{\beta}+H_{z,\beta}^{\mu}x_{z}^{\beta}
\end{array}
\]
yields 
\[
\begin{array}{ccl}
\left.\left(G_{\beta}^{\lambda}v^{\beta}+G_{z,\beta}^{\lambda}d_{z}(v^{\beta})\right)\right|_{z=0} & = & 0\\
\left.\left(H_{\beta}^{\mu}v^{\beta}+H_{z,\beta}^{\mu}d_{z}(v^{\beta})\right)\right|_{z=1} & = & 0\,.
\end{array}
\]
For the same reason as above, the total derivatives degenerate to
partial derivatives, and we obtain
\begin{equation}
\begin{array}{ccl}
G_{\beta}^{\lambda}v^{\beta}(0,t)+G_{z,\beta}^{\lambda}\partial_{z}v^{\beta}(0,t) & = & 0\\
H_{\beta}^{\mu}v^{\beta}(1,t)+H_{z,\beta}^{\mu}\partial_{z}v^{\beta}(1,t) & = & 0\,.
\end{array}\label{eq:NonObservability_BoundaryCond_linear}
\end{equation}
This are just the original boundary conditions (\ref{eq:Linear_PDE_BC}).
Since in (\ref{eq:NonObservability_BoundaryCond_linear}) there occur
no variables $x^{\alpha}$ or $x_{z}^{\alpha}$, it makes again no
difference whether we evaluate (\ref{eq:NonObservability_BoundaryCond_linear})
on the submanifolds $\mathcal{S}_{A}^{1}\subset\mathcal{B}_{A}$ and
$\mathcal{S}_{B}^{1}\subset\mathcal{B}_{B}$ determined by the boundary
conditions, or on $\mathcal{B}_{A}$ and $\mathcal{B}_{B}$ themselves.
Thus, we can already observe that the coefficients of the infinitesimal
generator (\ref{eq:InfGen_linear}) of a vertical symmetry group must
satisfy the homogenous part of the original PDEs (\ref{eq:Linear_PDE})
with the original boundary conditions (\ref{eq:Linear_PDE_BC}). Finally,
an evaluation of the condition (\ref{eq:NonObservability_OutputCond})
with 
\[
c=C_{\beta}x^{\beta}+C_{z,\beta}x_{z}^{\beta}
\]
yields
\[
\left.\left(C_{\beta}v^{\beta}+C_{z,\beta}d_{z}(v^{\beta})\right)\right|_{z=z_{0}}=0\,,
\]
and if we replace again the total derivatives by partial derivatives
we get 
\begin{equation}
C_{\beta}v^{\beta}(z_{0},t)+C_{z,\beta}\partial_{z}v^{\beta}(z_{0},t)=0\,.\label{eq:NonObservability_OutputCond_linear}
\end{equation}
The left-hand side of (\ref{eq:NonObservability_OutputCond_linear})
is just the original system output (\ref{eq:Linear_PDE_Output}).
Thus, for the coefficients of a vector field (\ref{eq:InfGen_linear})
that satisfies the conditions of Theorem \ref{thm:NonObservability}
we need (non-trivial) solutions of the homogenous part of the PDEs
(\ref{eq:Linear_PDE}) with the boundary conditions (\ref{eq:Linear_PDE_BC})
that generate an output (\ref{eq:Linear_PDE_Output}) which is identically
zero, i.e. $y(t)=0$ for all $t$.

However, it is well-known from infinite-dimensional linear systems
theory that non-trivial solutions that generate an output which is
identically zero exist if and only if the system is not (approximately)
observable, see e.g. \cite{CurtainZwart1995}: Because of the superposition
principle, for linear systems the output is the sum of a part that
depends on the initial condition and a part that depends on the input.
Therefore, the question whether there exists a choice of the input
such that two initial conditions $\gamma_{x}(z,0)$ and $\bar{\gamma}_{x}(z,0)$
produce different outputs reduces to the question whether they produce
different outputs for the homogenous system without input. If they
produce the same output, then again because of the superposition principle
the initial condition $\bar{\gamma}_{x}(z,0)-\gamma_{x}(z,0)$ generates
such an output which is identically zero. Consequently, for linear
systems, the conditions of Theorem \ref{thm:NonObservability}, and
therefore the existence of a symmetry group $\Phi_{\varepsilon}$
that does not change the trajectories of the input and the output,
are necessary and sufficient for non-observability.

The set of initial conditions that generate an output identically
zero is a subspace of the infinite-dimensional state space which is
called the non-observable subspace, see \cite{CurtainZwart1995}.
Thus, for linear systems, the symmetry groups $\Phi_{\varepsilon}$
transform solutions $\gamma$ into other solutions $\Phi_{\varepsilon}\circ\gamma$
just in such a way that the difference $\left(\Phi_{x,\varepsilon}\circ\gamma_{x}(z,0)\right)-\gamma_{x}(z,0)$
of the initial conditions is an element of the non-observable subspace.

Of course, it is important to remark that a comparison between the
results of our symmetry group approach and the observability concepts
of \cite{CurtainZwart1995} suffers from two problems. First, as already
pointed out, for the calculation of symmetry groups we need a differential-geometric
framework and consider like in \cite{Olver1993} only smooth solutions,
whereas the semigroup theory used in \cite{CurtainZwart1995} is based
on mild or generalized solutions. Second, we consider outputs which
are defined at a single point $z_{0}\in\bar{\Omega}$. In the geometric
framework this is perfectly possible, whereas in \cite{CurtainZwart1995}
point outputs are not considered since the corresponding output maps
are in general not bounded. However, it would be as well possible
to consider instead of (\ref{eq:Nonlinear_PDE_y}) distributed outputs
\[
y(t)=c(t,x(z,t),\partial_{z}x(z,t))
\]
that are defined on the whole spatial domain $\bar{\Omega}$. The
only difference is that in Theorem \ref{thm:NonObservability} the
Lie derivative (\ref{eq:NonObservability_OutputCond}) of the output
would have to vanish for all $z\in\bar{\Omega}$, and not only at
$z_{0}$. The linear counterpart of such a distributed output is
\[
y(t)=C_{\beta}(z)x^{\beta}(z,t)+C_{z,\beta}(z)\partial_{z}x^{\beta}(z,t)\,.
\]
For the case $C_{z,\beta}(z)=0$ without derivatives, such a distributed
output might correspond to a bounded map from the state space to the
output space\footnote{The boundedness depends of course on the chosen norms of the involved
infinite-dimensional vector spaces.}, and therefore fit into the system class considered in \cite{CurtainZwart1995}.

We also want to remark that the observability problem is often considered
for a finite time interval $[0,\tau]$ with some $\tau>0$. In infinite-dimensional
linear systems theory this is called observability on $[0,\tau]$.
Although we have presented all our results for the infinite time interval
$\mathbb{R}^{+}$, they remain valid if we replace the space-time
manifold $\bar{\Omega}\times\mathbb{R}^{+}$ by $\bar{\Omega}\times[0,\tau]$.

\subsection{A Linear Example}

Consider the linear wave equation
\begin{equation}
\begin{array}{ccl}
\partial_{t}x^{1}(z,t) & = & x^{2}(z,t)\\
\partial_{t}x^{2}(z,t) & = & \partial_{z}^{2}x^{1}(z,t)+u(t)
\end{array}\label{eq:LinEx_PDE}
\end{equation}
on the domain $\Omega=(0,1)$ with Dirichlet boundary conditions
\begin{equation}
\begin{array}{ccc}
x^{1}(0,t) & = & 0\\
x^{1}(1,t) & = & 0
\end{array}\label{eq:LinEx_BC}
\end{equation}
at $z=0$ and $z=1$, and the output
\begin{equation}
y=x^{1}(\tfrac{1}{2},t)\label{eq:LinEx_Output}
\end{equation}
at $z_{0}=\tfrac{1}{2}$. The input $u(t)$ can be interpreted as
an equally distributed force density.

It may be verified that
\begin{equation}
\begin{array}{ccl}
x^{1}(z,t) & = & \sin(2\pi z)\cos(2\pi t)\\
x^{2}(z,t) & = & -2\pi\sin(2\pi z)\sin(2\pi t)
\end{array}\label{eq:LinEx_sol}
\end{equation}
is a solution of the homogenous part
\begin{equation}
\begin{array}{ccl}
\partial_{t}x^{1}(z,t) & = & x^{2}(z,t)\\
\partial_{t}x^{2}(z,t) & = & \partial_{z}^{2}x^{1}(z,t)
\end{array}\label{eq:LinEx_Hom_PDE}
\end{equation}
of (\ref{eq:LinEx_PDE}) that fulfills the boundary conditions (\ref{eq:LinEx_BC})
and produces an output (\ref{eq:LinEx_Output}) which is identically
zero. Thus, the corresponding initial condition
\[
\begin{array}{ccl}
x^{1}(z,0) & = & \sin(2\pi z)\\
x^{2}(z,0) & = & 0
\end{array}
\]
is an element of the non-observable subspace, and the system is not
observable. The vector field
\[
v=\sin(2\pi z)\cos(2\pi t)\partial_{x^{1}}-2\pi\sin(2\pi z)\sin(2\pi t)\partial_{x^{2}}
\]
with coefficients from (\ref{eq:LinEx_sol}) satisfies the conditions
of Theorem \ref{thm:NonObservability}, and is the infinitesimal generator
of a symmetry group
\begin{multline*}
\Phi_{\varepsilon}:(z,t,x^{1},x^{2},u)\rightarrow(z,t,x^{1}+\sin(2\pi z)\cos(2\pi t)\varepsilon,\\
x^{2}-2\pi\sin(2\pi z)\sin(2\pi t)\varepsilon,u)
\end{multline*}
that does not change the trajectories of input and output.

\section{Conclusions}

In this paper, we have suggested to use symmetry groups that do not
change the trajectories of the input and the output for proving that
a nonlinear infinite-dimensional system is not observable. Based on
a differential-geometric system representation, we have derived conditions
for the existence of such special symmetry groups. Even though the
original system is described by nonlinear PDEs, these conditions are
linear PDEs with additional restrictions. For linear infinite-dimensional
systems, the derived conditions simplify considerably, and become
necessary and sufficient for non-observability. In fact, they coincide
with the existence of a non-trivial non-observable subspace.

\bibliographystyle{IEEEtran}
\bibliography{IEEEabrv,Literature_Bernd}

\end{document}